\documentclass[11pt, oneside]{amsart}   	
\usepackage{geometry}                		
\geometry{letterpaper}                   		
\setlength{\textheight}{22cm}

\usepackage{amssymb}
\usepackage{hyperref}
\usepackage{multirow}

\usepackage{todonotes}
\usepackage{amssymb,amsthm,amsmath}
\usepackage{hyperref}
\usepackage{xypic}
\usepackage{tikz}
\usetikzlibrary{calc}
\usepackage{rotating}
\setlength\rotFPtop{0pt plus 1fil}

\usepackage{amssymb}
\usepackage{xypic}
\usepackage{tikz}
\usetikzlibrary{calc}

\usepackage[all, cmtip]{xy}
\usepackage{mathrsfs}  

\usepackage{eucal,accents,upgreek,enumerate}
\usepackage[headings]{fullpage}
\usepackage{bm}

\newtheorem{theorem}{Theorem}
\numberwithin{theorem}{section}
\newtheorem{proposition}[theorem]{Proposition}
\newtheorem{example}[theorem]{Example}
\newtheorem{lemma}[theorem]{Lemma}
\newtheorem{corollary}[theorem]{Corollary}
\newtheorem{conjecture}{Conjecture}

\theoremstyle{definition}
\newtheorem{definition}[theorem]{Definition}
\newtheorem{remark}[theorem]{Remark}

\newtheorem*{acknowledgements}{Acknowledgements}

\newtheorem*{definition*}{Definition}
\newtheorem*{theorem*}{Theorem}
\newtheorem*{proposition*}{Proposition}
\newtheorem*{conjecture*}{Conjecture}
\newtheorem*{question*}{Question}

\DeclareMathOperator{\pic}{Pic}
\DeclareMathOperator{\prin}{Prin}
\DeclareMathOperator{\Jac}{Jac}
\DeclareMathOperator{\trop}{Trop}
\DeclareMathOperator{\val}{val}
\DeclareMathOperator{\dv}{Div}
\DeclareMathOperator{\ddiv}{div}

\newcommand \ZZ {\mathbb{Z}}

\newcommand\Mp {\mathbb{M}^{\text{planar}}}

\usetikzlibrary{positioning}

\tikzset{main node/.style={circle,fill=blue!20,draw,minimum size=1.3cm,inner sep=0pt},
}

\begin{document}
	\title{\textsc{Bitangents of non-smooth tropical quartics}}
	\author{Heejong Lee and Yoav Len}
	\maketitle
	
	\begin{abstract}	
		We study bitangents of non-smooth  tropical plane quartics. Our main result is that with appropriate multiplicities, every  such curve has 7 equivalence classes of bitangent lines. Moreover, the multiplicity of bitangent lines varies continuously in families of tropical plane curves.
			\end{abstract}
	
	\section{Introduction}
	The topic of bitangents to  quartics curves has gained renewed attention recently with the introduction of tropical techniques. As shown in \cite{BLMPR}, every smooth tropical plane quartic admits seven families of bitangent lines. The relation between tropical and algebraic bitangents was later studied in \cite{Chan2, JL, LM, Panizzut}. 
	The goal of the current note is to begin an examination of the bitangents when the curve is singular. In this case, the genus of the tropical curve may not be $3$ anymore, and the number of bitangent lines may go down. However, there is a natural way of assigning a multiplicity to each of them (see Definition \ref{def:bitangentMultiplicity}). Our main result is as follows. 
	
	\begin{theorem*}[\ref{thm:mainTheorem}]
		Let $\Gamma$ be a tropical plane quartic of genus $g$. Then $\Gamma$ has $2^g$ equivalence classes of bitangent lines. One of them has multiplicity $2^{3-g}-1$, and each of the others has multiplicity $2^{3-g}$.
	\end{theorem*}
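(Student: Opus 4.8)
The plan is to reduce the enumeration to a count of square roots of the hyperplane class in the tropical Picard group, and then to obtain the multiplicities by degenerating from a smooth quartic. A tropical line meets $\Gamma$ in a divisor of degree $4$, and the line is bitangent precisely when this divisor has the form $2D$ for an effective $D$; so each equivalence class of bitangent lines should determine, and be determined by, a class $\eta \in \pic^2(\Gamma)$ with $2\eta \sim H$, where $H$ is the hyperplane (line) class. First I would make this dictionary precise, matching the combinatorial bitangent classes and the multiplicity of Definition~\ref{def:bitangentMultiplicity} with divisor-theoretic data, so that the theorem becomes a statement about the solutions of $2\eta \sim H$. Note that $\Gamma$ is smooth exactly when $g=3$, in which case $H = K_\Gamma$ and these solutions are the classical theta characteristics.

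The count $2^g$ then follows from the structure of the tropical Jacobian: the set of such $\eta$ is a torsor under $\pic^0(\Gamma)[2]$, and identifying $\pic^0(\Gamma) = \Jac(\Gamma)$ with the real torus $\RR^g/\Lambda$ shows that $\pic^0(\Gamma)[2] \cong (\ZZ/2\ZZ)^g$ has exactly $2^g$ elements. Hence there are $2^g$ equivalence classes of bitangent lines, once one checks that a square root exists at all. For the distinguished class I would invoke, or adapt to the class $H$, the combinatorial theory of theta characteristics on tropical curves, according to which exactly one of the $2^g$ square roots is non-effective; this is the class whose multiplicity I expect to drop by one.

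To compute the multiplicities I would place $\Gamma$ in a family specializing a smooth tropical quartic $\Gamma'$ of genus $3$, for which \cite{BLMPR} supplies $7$ bitangent classes of multiplicity $1$, together with the single non-effective square root of multiplicity $0$. The induced specialization map $(\ZZ/2\ZZ)^3 \twoheadrightarrow (\ZZ/2\ZZ)^g$ is surjective with kernel of order $2^{3-g}$, so exactly $2^{3-g}$ square roots on $\Gamma'$ limit to each $\eta$ on $\Gamma$. Since the unique non-effective square root of $\Gamma'$ lands in a single class $\eta_0$, that class receives only $2^{3-g}-1$ effective square roots while every other class receives $2^{3-g}$; reading the multiplicity of $\eta$ as the number of effective square roots specializing to it then yields the asserted values $2^{3-g}$ and $2^{3-g}-1$, with the total $(2^g-1)2^{3-g} + (2^{3-g}-1) = 7$ conserved. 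This conservation is exactly the continuity asserted in the abstract.

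The main obstacle I anticipate is the bookkeeping in this degeneration. I must show that the intrinsic multiplicity of Definition~\ref{def:bitangentMultiplicity} really agrees with the number of smooth classes specializing to a given class, that the specialization is genuinely $2^{3-g}$-to-one, and that the non-effective square root of $\Gamma'$ specializes into the non-effective square root of $\Gamma$, so that $\eta_0$ is the distinguished class. The delicate geometry lives in the non-unimodular cells of the dual subdivision, where bitangent lines can move in positive-dimensional families and the naive tangency conditions degenerate; controlling these cells, and verifying that effectivity is tracked correctly through the specialization, is where the real work will be required.
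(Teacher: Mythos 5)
Your overall strategy --- identify bitangent classes with square roots of the degree-$4$ line class, count them as a torsor under the $2$-torsion of the Jacobian to get $2^g$, and extract the multiplicities from the $2^{3-g}$-to-one specialization from a smooth genus-$3$ quartic --- is the same as the paper's, which gets the count and multiplicities from theta characteristics and the realizability from a limit of smooth quartics. But two steps of your setup would fail as written. First, on a non-smooth tropical curve the ``hyperplane class $H$'' is not well defined in the ordinary Picard group of the underlying metric graph: two line sections need not be linearly equivalent there, precisely because of the weighted edges and weighted vertices. The paper's fix is its main technical device, the paired metric graph $\Sigma$ of genus $3$ and the relative Picard group $\pic(\Sigma,\Gamma)=\dv(\Gamma)/\phi_*(\prin(\Sigma))$, in which all line sections become equivalent (Lemma \ref{lem:lines}) and equal to $\phi_*(K_\Sigma)$ (Proposition \ref{prop:canonicalLines}). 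Your dictionary needs this (or an equivalent) modification before the torsor count makes sense. Note also that $\Sigma$ plays exactly the role of your ``nearby smooth curve'': the fibers of $\phi_*$ on $2$-torsion are your specialization fibers, computed combinatorially without any actual degeneration.

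Second, your intermediate claim that ``exactly one of the $2^g$ square roots [on $\Gamma$] is non-effective'' is false when $g<3$: as the paper observes, the image of the non-effective theta characteristic of $\Sigma$ is already equivalent to an effective divisor on $\Gamma$, so all $2^g$ classes are effective and all are realized by bitangents --- which is why the distinguished class has positive multiplicity $2^{3-g}-1$ rather than $0$. The distinguished class is singled out not by non-effectivity on $\Gamma$ but by receiving the unique non-effective square root from upstairs; your degeneration paragraph does get this right, so the slip is an internal inconsistency rather than a fatal error. With the relative Picard group in place, the remaining verifications you flag (that the intrinsic multiplicity of Definition \ref{def:bitangentMultiplicity} counts effective square roots in the fiber, and that every class is realized by an actual bitangent line, via upper semicontinuity of rank in the universal Picard space) are exactly the content of the paper's Lemma \ref{lem:multiplicity} and Theorem \ref{thm:continuity}.
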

	\noindent In particular, when $g=3$, there are $7$ bitangent lines with multiplicity $1$, and a single bitangent with multiplicity $0$, in accordance  \cite[Theorem 3.9]{BLMPR}.
	\noindent	
	\medskip
	
	Non-hyperelliptic smooth algebraic curves of genus $3$ are canonically embedded in the plane. Therefore, every line section of the curve gives rise to the canonical divisor, and their bitangents are in bijection with half canonical divisors. An analogous argument works for smooth tropical quartics. However, when the tropical curve is no longer smooth, and especially when its genus drops, this bijection falls apart. To overcome this difficulty, we parametrize the tropical quartic using a metric graph of genus 3, which we refer to as the \emph{paired metric graph}, and obtain an upper bound on the number of bitangent lines. 
	
	On the other hand, to obtain a lower bound on the number of bitangents, we use a limiting process. Given a non-smooth quartic, we approximate it using a sequence of smooth quartics. 
	Via upper semicontinuity of intersection numbers, each of the $7$ bitangent lines of the smooth quartic converges in the limit to a bitangent of our non-smooth quartic. 
	
	%
	%
	\begin{acknowledgements}
		This paper arose from the second author's project group at the 2017 Fields Undergraduate Summer Research Program funded by Fields Institute. We thank the Fields Institute for this prestigious opportunity. We wish to thank the other member of the group, Run Yu Tan,
		Akshay  Tiwary, and Wanlong Zheng for insightful comments and fruitful discussions, and we thank  Ralph Morrison for helpful discussions. 
	\end{acknowledgements}
	
	\section{Preliminaries}
	In this section, we provide a brief overview of some of the basic terms in tropical geometry that will be used throughout. See \cite[Chapter 1]{MS} for more detail. 
	
	\subsection{Tropical plane curves}
	
	A tropical plane curve $\Gamma$ is a planar metric graph possibly with unbounded edges, together with a weight function $w_E : E \rightarrow \ZZ_{> 0}$, such that each edge has a rational slope, satisfying the following balancing condition: the weighted sum of the outgoing primitive integral vectors emanating from every vertex is zero. The last condition is called the \emph{balancing condition}. For the rest of the paper, $\Gamma$ will denote a plane tropical curve. 
	
	Every tropical curve is dual to a regular subdivision of a lattice polygon, known as its \emph{subdivided Newton polygon}. Each edge of the curve corresponds to an edge of the Newton polygon, and each vertex corresponds to a face. The weight of an edge is the lattice length of the corresponding edge of the polygon. Note that the Newton polygon determines the directions and weights of the edges of the tropical curve, but not the length. 
	%
	
	\begin{definition}
	A tropical curve $\Gamma$ has degree $d$ if its Newton polygon is a standard right angled triangle triangle of edge length $d$. 
	\end{definition}
	
\noindent	In particular, a tropical quartic is dual to a right angled triangle of edge length $4$. 
Note that a general tropical line meets a tropical curve of degree $d$ at $d$ points. If the Newton polygon consists only of triangles with area $\frac{1}{2}$, then the curve is said to be $\emph{smooth}$. 
	
	\subsection{Singularities of tropical plane curves} 
Each vertex of $\Gamma$ corresponds to a face of the Newton polygon. The curve is said to be \emph{singular} at $v$ if the area of the dual face is strictly greater than $\frac{1}{2}$. The following theorem determines precisely when that occurs \cite{Pick}.
\begin{theorem}[Pick's theorem]
		Let P be a lattice polygon with area $A$. Then
		\[A=\frac{b}{2} + i - 1,\]
		where b denotes the number of lattice points on the boundary and i denotes the number of lattice points in the interior.
	\end{theorem}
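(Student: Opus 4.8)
My plan is to reduce Pick's formula to the single case of an \emph{elementary} (or \emph{primitive}) lattice triangle---one whose only lattice points are its three vertices---and then to recover the general polygon by an additivity argument. For a lattice polygon $Q$, write the \emph{Pick quantity} as
\[
\Pi(Q) = i(Q) + \tfrac{1}{2}\,b(Q) - 1,
\]
so that the theorem asserts $A(Q)=\Pi(Q)$. Since the area $A$ is manifestly additive when two lattice polygons are glued along a common edge, the crux of this route is to show that $\Pi$ enjoys the same additivity.

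First I would verify additivity of $\Pi$ directly. If $Q$ is cut into $Q_1$ and $Q_2$ along a lattice segment $S$ carrying exactly $k$ lattice points (its two endpoints included), then the $k-2$ points interior to $S$ become interior points of $Q$ while the two endpoints remain on $\partial Q$. A short bookkeeping yields $i(Q)=i(Q_1)+i(Q_2)+(k-2)$ and $b(Q)=b(Q_1)+b(Q_2)-2k+2$, and substituting these into $\Pi$ makes the $k$-dependence cancel, leaving $\Pi(Q)=\Pi(Q_1)+\Pi(Q_2)$ after one line of algebra. Next I would invoke the standard fact that every lattice polygon can be subdivided into elementary triangles, refining an arbitrary triangulation by cutting along chords until no triangle contains a lattice point beyond its vertices. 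By additivity of both $A$ and $\Pi$, it then suffices to check the identity on one elementary triangle $T$, where $i(T)=0$ and $b(T)=3$ give $\Pi(T)=\tfrac12$; thus everything reduces to the claim that every elementary lattice triangle has area exactly $\tfrac12$.

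I expect this last claim to be the main obstacle, as it is the genuinely arithmetic ingredient. To prove it I would translate a vertex of $T$ to the origin so that $T$ is spanned by primitive vectors $u,v$, and consider the half-open parallelogram $\{\,su+tv : 0\le s,t<1\,\}$. The emptiness of $T$ forces this parallelogram to contain no lattice point other than the origin, whence $|\det(u,v)|=1$ and $A(T)=\tfrac12$. Once this is established, the additivity argument propagates the equality $A=\Pi$ from elementary triangles back to $Q$, completing the proof.

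An alternative to the additivity bookkeeping, which I would keep in reserve, is to fix an elementary triangulation with $T$ triangles and apply Euler's formula $V-E+F=2$ to the resulting planar graph. Counting triangle--edge incidences gives $2E=3T+b$, and the bounded faces are exactly the triangles so $F=T+1$; substituting $V=i+b$ then yields $T=2i+b-2$. Combined with the same area-$\tfrac12$ fact for each elementary triangle, this gives $A=\tfrac{T}{2}=i+\tfrac{b}{2}-1$. Either way, the determinant computation for empty triangles is the one step that cannot be bypassed.
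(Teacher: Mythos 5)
Your proof is correct. Note, however, that the paper does not prove Pick's theorem at all: it is a classical result quoted with a citation, so there is no ``paper proof'' to compare against. What you have written is the standard self-contained argument --- additivity of the quantity $i+\tfrac{b}{2}-1$ under gluing along a lattice chord, reduction to elementary triangles via triangulation, and the key arithmetic fact that an empty lattice triangle spans a unimodular pair and hence has area $\tfrac12$ --- together with the well-known Euler-formula variant as a backup. Your bookkeeping for the cut ($i(Q)=i(Q_1)+i(Q_2)+(k-2)$, $b(Q)=b(Q_1)+b(Q_2)-2k+2$) checks out, as does the count $T=2i+b-2$ in the alternative route. Two places where a referee might ask for one more line: (i) the claim that emptiness of $T$ forces the half-open parallelogram $\{su+tv: 0\le s,t<1\}$ to contain only the origin uses the point reflection $w\mapsto u+v-w$, which sends lattice points of the complementary half back into $T$, and one should say this; (ii) the existence of an elementary triangulation of an arbitrary (possibly non-convex) lattice polygon, and the fact that it can be built by successive chord cuts so that your two-piece additivity applies inductively, deserves at least a reference. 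Neither is a gap in substance; both are standard.
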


\noindent Since every polygon has at least three vertices, its area is greater than $\frac{1}{2}$ if and only if it has additional lattice points either on the boundary or in the interior. 
		 Figure 1 shows three different ways to obtain singularities. In the first case, the edge adjacent to the vertex has weight $2$. In the second case, the vertex is $4$-valent.  The third type of singularity occurs  when the face dual to the vertex contains a lattice point in its interior, which means that the genus of the entire curve has dropped. We keep track of the lost genus by adding integer weights on the vertices. 
	
	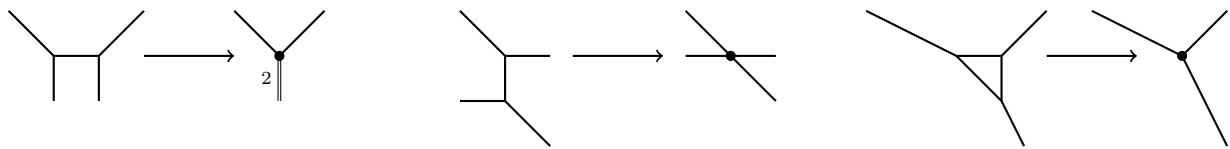
\begin{figure}[h]
		\begin{tikzpicture}[scale=.6,color=black]
		
		\begin{scope}
		
		\draw[thick] (-1,1) -- (0,0);
		\draw[thick] (0,0) -- (1,0);
		\draw[thick] (2,1) -- (1,0);
		\draw[thick] (1,-1) -- (1,0);
		\draw[thick] (0,0) -- (0,-1);
		\draw[thick, ->] (2,0) -- (4,0);
		\end{scope}

		\begin{scope}[shift={(5,0)}]
		
		\draw[thick] (-1,1) -- (0,0);
		\draw[thick] (0,0) -- (1,1);
		\draw[double] (0,0) -- (0,-1);
		\tiny\node [left] at (0,-.5) {2};
		\draw [fill=black,radius=.1] (0,0) circle;
		\end{scope}
		
		\begin{scope}[shift={(10,0)}]
		\draw[thick] (-1,1) -- (0,0);
		\draw[thick] (0,-1) -- (0,0);
		\draw[thick] (1,0) -- (0,0);
		\draw[thick] (0,-1) -- (-1,-1);
		\draw[thick] (1,-2) -- (0,-1);
		\draw[thick, ->] (1.5,0) -- (3.5,0);
		\end{scope}

		\begin{scope}[shift={(15,0)}]
		\draw[thick] (-1,1) -- (1,-1);
		\draw[thick] (-1,0) -- (1,0);
		\draw [fill=black,radius=.1] (0,0) circle;
		\end{scope}
		
		\begin{scope}[shift={(20,0)}]
		\draw[thick] (-2,1) -- (0,0);
		\draw[thick] (0,0) -- (1,-1);
		\draw[thick] (0,0) -- (1,0);
		\draw[thick] (1,-1) -- (1,0);
		\draw[thick] (1,-1) -- (1.5,-2);
		\draw[thick] (2,1) -- (1,0);
		\draw[thick, ->] (2,0) -- (4,0);
		\end{scope}
		
		\begin{scope}[shift={(25,0)}]
		\draw[thick] (-2,1) -- (0,0);
		\draw[thick] (0,0) -- (1,-2);
		\draw[thick] (0,0) -- (1,1);
		\draw [fill=black,radius=.1] (0,0) circle;
		\end{scope}

		\end{tikzpicture}
		\caption{Singular vertices on tropical plane curve.}
		\label{fig:singularity}
	\end{figure}

	\begin{definition}
		 The \emph{weight} function $w_\Gamma:\Gamma\to\mathbb{Z}_{\geq 0}$ assigns to each vertex of $\Gamma$ the number of lattice points in the interior of its dual face, and is zero everywhere else. 
\end{definition}
	
	Given a tropical plane curve $\Gamma$, we parametrize it with a trivalent metric graph $\Sigma$ (possibly with infinite edges). The vertex set of $\Sigma$ is the same as $\Gamma$. Every edge of weight $w_e$ of $\Gamma$ is replaced with $w_e$ edges of $\Sigma$ of the same length. Whenever a vertex $v$ of $\Gamma$ has weight $w_v>0$, we attach $w_v$ loops of length $\epsilon>0$ based at the corresponding vertex of $\Sigma$. None of our results below depend on the length of these loops.
	We have a natural map $\phi:\Sigma\to\Gamma$ which collapses loops and multiple edges. See Figure \ref{fig:pairedmetricgraph} for an example.
	
	\begin{figure}\label{fig:pairing}
		\begin{tikzpicture}[scale=.5 , color=black]
		\begin{scope}
		\draw[thick] (-1,1) -- (0,0);
		\draw[thick] (0,-1) arc (-30:30:1cm);
		\draw[thick] (0,-1) arc (210:150:1cm);
		\draw[thick] (-1,-2) -- (-2,-2);
		\draw[thick] (1,1) -- (0,0);
		\draw[thick] (-1,-2) -- (0,-1);
		\draw[thick] (0,-1) -- (2,-3);
		\draw[thick] (-1,-2) -- (-1,-3);
		\draw[thick] (-1,-3) -- (2,-3);
		\draw[thick] (-1,-3) -- (-2,-4);
		\draw[thick] (2,-3) -- (4,-4) -- (5,-3);
		\draw[thick] (4,-4) -- (4.5,-5);
		\draw[thick] (4,-4) arc (-30:30:1cm);
		\draw[thick] (4,-4) arc (210:150:1cm);
		\draw[thick, ->] (7,-2) -- (10,-2);
		\tiny\node [above] at (8.5,-2) {Pairing morphism $\phi$};
		\tiny\node [below] at (1,-5) {Paired metric graph $\Sigma$};
		
		\tiny\node [left] at (0,0) {$P$};
		\tiny\node [left] at (0,-1) {$Q$};
		\tiny\node [below left] at (4,-4) {$R$};

		\end{scope}
		
		\begin{scope}[shift={(15,0)}]
		\draw[thick] (-1,1) -- (0,0);
		\draw[double] (0,-1) -- (0,0);
		\tiny\node [left] at (0,-.5) {2};
		\draw[thick] (-1,-2) -- (-2,-2);
		\draw[thick] (1,1) -- (0,0);
		\draw[thick] (-1,-2) -- (0,-1);
		\draw[thick] (0,-1) -- (2,-3);
		\draw[thick] (-1,-2) -- (-1,-3);
		\draw[thick] (-1,-3) -- (2,-3);
		\draw[thick] (-1,-3) -- (-2,-4);
		\draw[thick] (2,-3) -- (4,-4) -- (5,-3);
		\draw[fill=black, radius=.1] (4,-4) circle;
		\tiny\node [above] at (4,-4) {1};
		\draw[thick] (4,-4) -- (4.5,-5);
		\tiny\node [below] at (1,-5) {Tropical projective plane curve $\Gamma$};
		\end{scope}
		\end{tikzpicture}
		\caption{Paired metric graph and pairing morphism.}
		\label{fig:pairedmetricgraph}
	\end{figure}
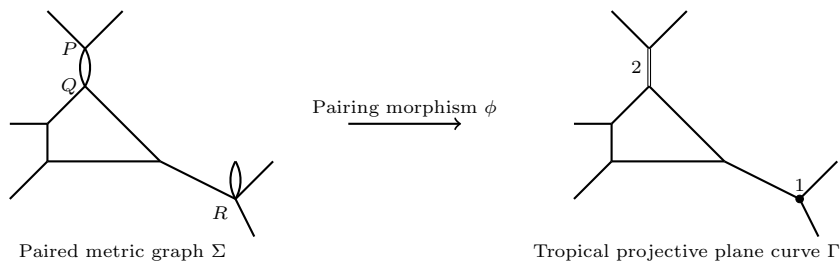

	\begin{definition}
		The metric graph $\Sigma$ constructed this way is called the \emph{paired} metric graph of $\Gamma$, and the map $\phi$ is called the \emph{pairing morphism}. 
	\end{definition}
	
	It is straightforward to check that the genus $g_{\Sigma}$ of $\Sigma$ is exactly $\frac{(d-1)(d-2)}{2}$. 
	%

	\subsection{Divisor theory on metric graphs}
	In this section, we  briefly recall the theory of divisors on metric graphs. A detailed discussion can be found in \cite{BJ, GK}.
	
	Let $\Sigma$ be a metric graph. The $\emph{divisor group }$ $\dv(\Sigma)$ is the free abelian group on the points of $\Sigma$. An element $D$ of $\dv(\Sigma)$ is called a $\emph{divisor}$. Each divisor has a unique representation
	\[
	D=a_1\cdot p_1 + a_2\cdot p_2 + \cdots + a_n\cdot p_n.
	\]
	When the coefficient of $p$ at $D$ is $a$, we write $D(p)=a$.
	The $\emph{degree}$ of $D$ is $\deg(D):=a_1 + a_2 + \cdots a_n$, and $D$ is said to be $\emph{effective}$ if all the coefficients are non-negative.
	
	A $\emph{rational function}$ on $\Sigma$ is a continuous piecewise linear function with integer slopes. For each rational function $f$ on $\Sigma$, we associate a divisor $\ddiv(f)$ where the coefficient at each point $p$ is equal to the sum of incoming slopes of $f$ at $p$. Such divisors are referred to as $\emph{principal}$, and together they  form a subgroup $\prin(\Sigma)$ of $\dv(\Sigma)$. Divisors $D_1$, $D_2$, are said to be \emph{linearly equivalent} if $D_1 - D_2$ is principal. The quotient group $\pic(\Sigma):=\dv(\Sigma)/\prin(\Sigma)$ is called the $\emph{Picard group}$ of $\Sigma$. The restriction to  divisors of degree zero is the \emph{Jacboian} of $\Sigma$, denoted $\Jac(\Sigma)$. 
	
	A divisor $D$ has rank $r$ if $D-E$ is linearly equivalent to an effective divisor, for \emph{every} effective divisor $E$ of degree $r$, and $r$ is the largest integer with this property.  The $\emph{canonical divisor}$ is any divisor linearly equivalent to 
	\[
	K_\Sigma := \sum_{v\in \Sigma}(\val(v)-2)\cdot v.
	\]
	From the tropical Riemann--Roch theorem \cite{GK}, the degree of the canonical divisor is $2g-2$.  
	
	
	\begin{definition}
		A $\emph{theta characterisctic}$ of a metric graph $\Sigma$ is a divisor $\theta$ which satisfies $2\theta \sim K_{\Sigma}$. 
	\end{definition}

	\subsection{Divisor theory on tropical plane curves}
For a smooth tropical or algebraic curve, linear equivalence of divisors is closely related with intersections: the pullback of two different lines gives rise to equivalent divisors. However, this relation breaks when the curve is not smooth. Therefore,  statements of interest to us will be stated and proved on the paired metric graph and shifted to the tropical plane curve. 

	%
	Let $\Gamma$ be a tropical plane curve, and let $\phi:\Sigma\to\Gamma$ be the paired metric graph. As usual, $\dv(\Gamma)$ is the free abelian group generated by the set of points of $\Gamma$.  However, linear equivalence of divisors needs to be defined differently.	The pairing morphism induces a group homomorphism $\phi_*$ from $\dv(\Sigma)$ to $\dv(\Gamma)$, by defining $\phi_*(D)(P) = \sum_{P'\in\phi^{-1}(P)}D(P')$. 

	
	\begin{definition}
		The Picard group of $\Gamma$ with respect to $\Sigma$ is  
		\[
		\pic(\Sigma,\Gamma):= \dv(\Gamma)/{\phi_*(\prin(\Sigma))}.
		\]
		Similarly, the Jacobian of $\Gamma$ with respect to $\Sigma$ is 
		\[
				J(\Sigma,\Gamma):= \dv_0(\Gamma)/{\phi_*(\prin(\Sigma))}.
		\]
	\end{definition}
\noindent	One easily sees that $\pic(\Sigma,\Gamma) \cong \pic(\Sigma)/{<[Q]-[Q']>_{\phi(Q)=\phi(Q')}}$.
%

	The group $\pic(\Sigma,\Gamma)$ can be identified as the usual Picard group of another metric graph. Let $\Gamma^w$ be the non-weighted metric graph obtained from $\Gamma$ by rescaling every edge $e$ by $\frac{1}{w_e}$ and forgetting the weights. Then there is a natural map between $\Gamma$ and $\Gamma^w$ given by rescaling edges accordingly, that is extended by linearity to a map on divisors. 
	\begin{proposition}\label{prop:AJ}
		The Picard group of $\Gamma^w$  is isomorphic to the Picard group $\pic(\Sigma,\Gamma)$.
	\end{proposition}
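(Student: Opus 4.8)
The plan is to show that the group isomorphism $\dv(\Gamma)\cong\dv(\Gamma^w)$ induced by the rescaling map carries the subgroup $\phi_*(\prin(\Sigma))$ exactly onto $\prin(\Gamma^w)$. Write $\psi\colon\Gamma\to\Gamma^w$ for the rescaling homeomorphism (a point at distance $s$ from a vertex along an edge $e$ of weight $w_e$ is sent to distance $s/w_e$ along the corresponding edge of $\Gamma^w$), and set $\pi:=\psi\circ\phi\colon\Sigma\to\Gamma^w$. Since $\psi$ is a bijection on points, $\psi_*$ is a degree-preserving isomorphism $\dv(\Gamma)\xrightarrow{\sim}\dv(\Gamma^w)$ satisfying $\psi_*\circ\phi_*=\pi_*$. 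Hence it suffices to prove $\pi_*(\prin(\Sigma))=\prin(\Gamma^w)$, after which the proposition follows by passing to quotients.

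First I would dispose of the loops. If $L$ is a loop attached at a vertex $v$, then $\phi$ collapses $L$ to $v$, and the divisor of $f|_L$ on the circle $L$ has degree $0$; consequently the total contribution of $L$ to $\phi_*(\ddiv f)(v)$ vanishes. Thus $\phi_*(\ddiv f)$ is unchanged if we replace $f$ by the function agreeing with $f$ off the loops and constant on each loop, so throughout we may assume $f$ is constant on every loop and ignore the loops entirely.

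For the inclusion $\pi_*(\prin(\Sigma))\subseteq\prin(\Gamma^w)$, given a rational function $f$ on $\Sigma$ I would build a rational function $g$ on $\Gamma^w$ with $\ddiv_{\Gamma^w}(g)=\pi_*(\ddiv_\Sigma f)$ by prescribing slopes: on the edge of $\Gamma^w$ coming from a weight-$w_e$ edge $e$, declare the slope of $g$ at a point to be the sum of the (integer) slopes of $f$ over the $w_e$ parallel edges of $\Sigma$ lying above it. These sums are integers, and a short computation shows that the total change of $g$ across $e$ equals $f(v')-f(v)$ for the two endpoints $v,v'$ of $e$ — crucially independent of $w_e$ — so setting $g(v):=f(v)$ at every vertex makes $g$ continuous and well defined. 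Comparing slope jumps then yields $\ddiv_{\Gamma^w}(g)=\pi_*(\ddiv_\Sigma f)$ at interior points and at vertices (where the loop terms have already been arranged to cancel).

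The reverse inclusion is the heart of the matter and the main obstacle, because the naive pullback of $g$ to $\Sigma$ produces the non-integer slope $M/w_e$ on each parallel edge. The plan is to exploit the $w_e$ parallel edges to absorb this failure. I set $f(v):=g(v)$ at the vertices and, on a segment of $e$ where $g$ has constant integer slope $M$, I write $M=qw_e+r$ with $0\le r<w_e$ and distribute the slopes $q$ and $q+1$ among the parallel edges in a \emph{rotating} pattern: partition the segment into $w_e$ equal pieces and, on the $k$-th piece, assign slope $q+1$ to $r$ of the parallel edges (cyclically shifted with $k$) and slope $q$ to the rest. Then each parallel edge accrues the same total change $M\ell_e/w_e=g(v')-g(v)$, so $f$ is a genuine integer-slope function on $\Sigma$, while at every interior point the slopes over the parallel edges sum to the constant $M$, so all the auxiliary breakpoints cancel under $\pi_*$. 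Matching the genuine breakpoints of $g$ by the same device gives $\pi_*(\ddiv_\Sigma f)=\ddiv_{\Gamma^w}(g)$, which establishes the equality of the two subgroups and hence the desired isomorphism $\pic(\Sigma,\Gamma)\cong\pic(\Gamma^w)$.
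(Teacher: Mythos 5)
Your proof is correct and follows essentially the same strategy as the paper: identify $\dv(\Gamma)$ with $\dv(\Gamma^w)$ via the rescaling map, obtain one inclusion by summing slopes over the fibers, and obtain the reverse inclusion by distributing integer slopes among the $w_e$ parallel edges so that the auxiliary breakpoints cancel under pushforward while each parallel edge accrues the correct total change. The only difference is cosmetic — the paper assigns slope $s\cdot w_e$ on one distinguished subsegment of each parallel edge and $-s$ elsewhere, whereas you use the rotating $q/(q+1)$ pattern coming from $M=qw_e+r$ — and your explicit handling of the loops and of the interior breakpoints of $g$ is, if anything, a bit more careful than the paper's.
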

	
	\begin{proof}
		It is clear that the map from $\Gamma$ to $\Gamma^w$ induces a bijection between $\dv(\Gamma)$  and $\dv(\Gamma^w)$. We need to show that principal divisors on $\Sigma$ give rise to principal divisor on $\Gamma^w$, and that every principal divisor arises this way.
		Let $D$ be a principal divisor on $\Sigma$ such that $D = \ddiv(f)$ for some $f$, and let $D'$ be the corresponding divisor on $\Gamma^w$. Define a function $f'$ on $\Gamma^w$ whose slope at every point is the sum of the slopes at the corresponding points of $\Sigma$. Then $f'$ defines a continuous piecewise linear function with $\ddiv(f') = D'$. 
		
		Conversely, suppose that $E'=\ddiv(g')$ is a principal divisor on $\Gamma^w$. We define a function $g$ on $\Sigma$ as follows. Let $e$ be an edge of $\Gamma^w$ of length $\ell(e)$ where the slope of $g'$ is $s$. Assume that the corresponding edge of $\Gamma$ has weight $k$. Then, by definition, there are $k$ edges   $e_1,\ldots,e_k$ of  $\Sigma$, each of length $a\cdot\ell(e)$ mapping down to $e$. Divide each of them into $k$ segments of equal length. For each edge $e_i$, we define the slope of $g$ to be $-s$, except for the $i$-th segment, in which the slope is $s\cdot k$. Then $g$ is well defined, continuous, and $\ddiv(g)$ is a divisor mapping down to $D'$. 
	\end{proof}
	
	As a result of Proposition \ref{prop:AJ}, together with \cite[Theorem 3.4]{BakerFaber} , the group  $J(\Sigma,\Gamma)$ is a real torus of dimension $g_\Gamma$.  We define the \emph{canonical} divisor class of  $\Gamma$ as $K_{\Gamma} = \phi_*(K_{\Sigma})$. Note that this is not the same as the canonical divisor of the underlying metric graph of $\Gamma$. 
	Accordingly, we  define a \emph{theta characteristic} to be any divisor class $\theta$ satisfying  $2\theta \sim K_\Gamma$. This definition coincides with the $2$-torsion points of the Jacobian $\Jac(\Sigma,\Gamma)$. As a result, $\Gamma$ has $2^{g_\Gamma}$ theta characteristics.
	
%
%

\begin{definition}\label{def:multiplicity}
		The $\emph{multiplicity}$ of a theta characteristic class $\theta$ on a tropical plane curve is the number of effective theta characteristic classes in its preimage ${\phi_*}^{-1}(\theta)$.
	\end{definition}

Note that $\phi_*:J(\Sigma)\to J(\Sigma,\Gamma)$ is a map of groups. It follows that the fiber of each theta characteristic of $J(\Sigma,\Gamma)$ consists of  $2^{g_{\Sigma}-g_{\Gamma}}$ theta characteristics of $\Sigma$. Therefore, 
	
	\begin{lemma}\label{lem:multiplicity}
		There is a $2^{g_{\Sigma}-g_{\Gamma}}:1$ map from the set of theta characteristic classes on $\Sigma$ to the set of theta characteristic classes on $\Gamma$. Furthermore, only one among the $2^{g_{\Gamma}}$ theta characteristic classes on $\Gamma$ has $2^{g_{\Sigma}-g_{\Gamma}}-1$ effective theta characteristic classes in its preimage, and the others have $2^{g_{\Sigma}-g_{\Gamma}}$ effective theta characteristic classes in its preimage. 
	\end{lemma}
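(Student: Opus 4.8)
The plan is to push the theta characteristics of $\Sigma$ forward along $\phi_*$ and then read off the effective ones using the fact that a metric graph carries a single non-effective theta characteristic. First I would check that $\phi_*$ really does send theta characteristics to theta characteristics: if $2\theta \sim K_\Sigma$ then $2\theta - K_\Sigma \in \prin(\Sigma)$, hence $2\,\phi_*(\theta) - \phi_*(K_\Sigma) \in \phi_*(\prin(\Sigma))$, and since $K_\Gamma = \phi_*(K_\Sigma)$ and we quotient by $\phi_*(\prin(\Sigma))$ in $\pic(\Sigma,\Gamma)$, this reads $2\,\phi_*(\theta) \sim K_\Gamma$. Thus $\phi_*$ restricts to a map from the $2^{g_\Sigma}$ theta characteristics of $\Sigma$ to the $2^{g_\Gamma}$ theta characteristics of $\Gamma$. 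Combined with the discussion preceding the lemma — that $\phi_*$ is surjective and that the fiber over each theta characteristic of $\Gamma$ consists of exactly $2^{g_\Sigma - g_\Gamma}$ theta characteristics of $\Sigma$ — this already yields the first assertion, that the induced map is $2^{g_\Sigma - g_\Gamma}:1$.

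For the effectiveness count the key input is the theorem of Zharkov on tropical theta characteristics: on a metric graph of genus $g$, among the $2^g$ theta characteristic classes exactly one is non-effective, while the remaining $2^g - 1$ are effective. Applied to $\Sigma$, of genus $g_\Sigma$, this produces a distinguished non-effective class $\theta_0$, with all other theta characteristics of $\Sigma$ effective.

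It then remains to bookkeep across the fibers of $\phi_*$. The class $\theta_0$ lies in exactly one fiber, namely $\phi_*^{-1}(\phi_*(\theta_0))$; that fiber therefore contains $2^{g_\Sigma - g_\Gamma} - 1$ effective theta characteristics, whereas each of the remaining $2^{g_\Gamma} - 1$ fibers consists entirely of effective classes, contributing $2^{g_\Sigma - g_\Gamma}$ apiece. By Definition \ref{def:multiplicity} these fiber counts are precisely the multiplicities of the corresponding classes on $\Gamma$, which is exactly the claim. The only genuinely substantive step is the appeal to Zharkov's uniqueness of the non-effective theta characteristic; everything else is the torsor structure of $\phi_*$ on $2$-torsion together with counting. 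The main point to keep straight is that ``effective'' throughout refers to effectiveness of the class on $\Sigma$ (equivalently, rank $\geq 0$), matching both Zharkov's theorem and Definition \ref{def:multiplicity}, so that no mismatch of notions arises and all $2^{g_\Gamma}$ classes on $\Gamma$ are accounted for.
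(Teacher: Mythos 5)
Your argument matches the paper's (very terse) proof: both use that $\phi_*:J(\Sigma)\to J(\Sigma,\Gamma)$ is a group homomorphism to get fibers of size $2^{g_\Sigma-g_\Gamma}$ on $2$-torsion, and then invoke the fact that a metric graph has exactly one non-effective theta characteristic to single out the one fiber with $2^{g_\Sigma-g_\Gamma}-1$ effective classes. You spell out details the paper leaves implicit (that $\phi_*$ carries theta characteristics to theta characteristics, and the appeal to Zharkov's uniqueness, which the paper only records in the following subsection), but the route is the same.
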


\noindent In particular, when $g(\Gamma)=g(\Sigma)$, all the effective theta characteristics of $\Gamma$ have multiplicity $1$. On the other hand, when $g(\Gamma)<g(\Sigma)$, the image of the non-effective theta characteristic is equivalent to an effective divisor. Indeed, it has at least one effective preimage in $\Sigma$, and the pushforward map sends effective divisors to effective divisors.

\subsection*{Theta characteristics and cycles}
	The theta characteristics of a metric graph $\Sigma$ are in bijection with the $2$-torsion points of its Jacobian, and with with the elements of $H_0(\Sigma,\ZZ_2)$. 	In \cite{Zh}, Zharkov illustrates a combinatorial  recipe for  constructing a theta characteristic from every cycle of $\Sigma$. The trivial cycle gives rise to a characteristic  with $-1$ chip at at every vertex, and a chip in the midpoint of each edge. This is the non-effective theta characteristic. For a non-trivial cycle $\sigma$, an effective theta characteristic $\theta_\sigma$ is constructed as follows. On every point of $\sigma$, the value of $\theta_\sigma$ is $\val(p)-2$, where $\val(p)$ is the valency of $p$. For a point $p$ outside of $\sigma$, let $I_p$ be the number of incoming edges at $p$ from the cycle (in other words, it is the number of edges emanating from $p$ along which the distance from $\sigma$ is decreasing). Then $\theta_\sigma(p) = I_p-1$.
	
To find the theta characteristics of a plane tropical curve  $\Gamma$, we use Zhrakov's algorithm on the paired metric graph $\Sigma$. Whenever two cycles of $\Sigma$ get identified in $\Gamma$, the corresponding cycles are in the kernel of the pushforward map $\phi_*:J(\Sigma)\to J(\Sigma,\Gamma)$. Consequently,  the corresponding  theta characteristics map to the same theta characteristic of $\Gamma$.

\begin{example}
Let $\Sigma$ and $\Gamma$ be as in Figure \ref{fig:pairing}. Let $\sigma_1$ be the large cycle in the middle of $\Sigma$, and $\sigma_2$ the union of the large cycle and the cycle above it. Then $\theta_{\sigma_1} = P+R$ and $\theta_{\sigma_2} = Q+R$. 
The images of $\theta_{\sigma_1}$ and $\theta_{\sigma_2}$ are equivalent divisors in $J(\Sigma,\Gamma)$. This is consistent with the fact that the pairing morphism identifies the cycles $\sigma_1$ and $\sigma_2$.
\end{example}

		\section{Bitangent lines to tropical plane curves}
	In this section, we prove that every bitangent to a tropical plane curve gives rise to a theta characteristic. As always $\Gamma $ denotes a tropical plane quartic curve, $\Sigma$ denotes the paired metric graph, and $\phi$ the pairing morphism. 	
	Given a tropical line $\Lambda$, we obtain a divisor
	\[
	\Lambda\cdot\Gamma = \sum_{P\in\Gamma} a_P\cdot P
	\]
	 on $\Gamma$, where $a_P$ is the multiplicity of the stable intersection  of $\Gamma$ and $\Lambda$ at $P$. We remind the reader that to obtain the stable intersection, choose a vector v such that $\Gamma$ and $\Lambda+\epsilon\cdot v$ intersect properly, and take the limit as $\epsilon$ goes to zero.
	 
	A bitangent of $\Gamma$  is, morally speaking, a line that meets the curve with multiplicity at least $2$ at two or more points, or at one point with multiplicity at least $4$. 	However, we need to be a bit more careful  when dealing with non-transverse intersections. 
	\begin{definition}
	A tropical line $\Lambda$ is \emph{bitangent} to $\Gamma$ at $P$ and $Q$ if there is exists a piecewise linear function $f$ that is constant outside of the intersection, and such that $\Gamma\cdot\Lambda+\dv{f}$ is effective and contains $2P+2Q$. 
		\end{definition}		
\noindent	For instance, when a component of $\Lambda\cap\Gamma$ is an edge, there is a tangency at the midpoint of the edge.

	
	\begin{lemma}\label{lem:lines}
		All the line sections of a plane tropical curve $\Gamma$ are equivalent in $\pic(\Sigma,\Gamma)$. Their pullbacks are equivalent in $\pic(\Sigma)$.
	\end{lemma}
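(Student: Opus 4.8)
The plan is to prove this in two stages, working first on the paired metric graph $\Sigma$ where the classical theory applies, and then pushing the conclusion forward to $\Gamma$. The key observation is that line sections are dual to a fixed part of the Newton polygon: a tropical line is dual to the standard triangle of edge length $1$, and intersecting it with $\Gamma$ (dual to the triangle of edge length $4$) corresponds to a lattice-point count that does not depend on the choice of line. Concretely, I would begin by recalling that for a smooth tropical curve stable intersection with lines gives equivalent divisors, and that this equivalence is detected on $\Gamma^w$ (equivalently, via Proposition~\ref{prop:AJ}, in $\pic(\Sigma,\Gamma)$). So the statement to establish is that $\Lambda \cdot \Gamma \sim \Lambda' \cdot \Gamma$ for any two tropical lines $\Lambda, \Lambda'$.

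First I would reduce to a single degree of freedom. Any two tropical lines $\Lambda$ and $\Lambda'$ are translates of one another up to moving the vertex of the line, since all tropical lines in the plane have the same three primitive ray directions $(-1,0)$, $(0,-1)$, $(1,1)$. I would therefore set up a continuous family $\Lambda_s$ of lines interpolating between $\Lambda$ and $\Lambda'$, for instance by translating the vertex along a path, and track how $\Lambda_s \cdot \Gamma$ varies. Away from finitely many critical parameter values, the intersection points move continuously along the edges of $\Gamma$, and each intersection point traces out a piecewise-linear trajectory; the corresponding divisors differ by the divisor of the piecewise-linear function recording these displacements. The heart of the argument is that as $s$ varies, the total movement of the intersection divisor is realized by a genuine rational function on $\Gamma^w$ (equivalently on $\Sigma$), so consecutive members of the family are linearly equivalent, and linear equivalence is transitive.

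The main obstacle will be the critical values of $s$, where the line $\Lambda_s$ passes through a vertex of $\Gamma$ or where the intersection becomes non-transverse (an edge of $\Lambda_s$ overlaps an edge of $\Gamma$, or a ray exits through an unbounded end). At such moments intersection points can collide, split, or jump between edges, and I must verify that the stable-intersection divisor varies continuously through these events, so that the linear equivalence persists across the wall. This is exactly where the stable intersection convention matters: by perturbing via $\Lambda_s + \epsilon \cdot v$ and taking the limit, the divisor $\Lambda_s \cdot \Gamma$ is well defined and its total degree stays constant at $4$ (by the degree argument: a general line meets $\Gamma$ in $d=4$ points). I would argue that the continuity of stable intersection, combined with the balancing condition at each vertex of $\Gamma$, guarantees that the limiting divisor on either side of a wall differs only by a principal divisor.

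Finally, having shown $\Lambda \cdot \Gamma \sim \Lambda' \cdot \Gamma$ in $\pic(\Sigma,\Gamma)$, I would deduce the second assertion about pullbacks. By the definition of $\pic(\Sigma,\Gamma) = \dv(\Gamma)/\phi_*(\prin(\Sigma))$, the equivalence on $\Gamma$ means that $\Lambda \cdot \Gamma - \Lambda' \cdot \Gamma = \phi_*(\ddiv f)$ for some rational function $f$ on $\Sigma$. Choosing a set-theoretic lift of the line sections to divisors on $\Sigma$ that is compatible with $\phi_*$ (which exists because each intersection point of $\Gamma$ lifts to points of $\Sigma$ and one distributes the coefficients accordingly), the difference of the lifted divisors then equals $\ddiv f$ up to classes of the form $[Q]-[Q']$ with $\phi(Q)=\phi(Q')$; but such classes are precisely the ones we may absorb, and I expect the natural geometric lift (splitting each intersection evenly among the $w_e$ parallel edges of $\Sigma$ over an edge of $\Gamma$) to make the lifted sections genuinely linearly equivalent in $\pic(\Sigma)$. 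The one subtlety to check here is that the lift is canonical enough that no residual $[Q]-[Q']$ ambiguity remains, which I would handle by appealing directly to the construction of $\Sigma$ from $\Gamma$ and the explicit rational function produced in the proof of Proposition~\ref{prop:AJ}.
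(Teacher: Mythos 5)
Your overall strategy—deform $\Lambda$ into $\Lambda'$ through a family $\Lambda_s$ and check that the intersection divisor only changes by principal divisors—is a legitimate tropical technique, but as written it leaves the two hardest steps unproved, and the paper's actual argument shows they can be bypassed entirely. The missing idea is to use the \emph{defining equations} of the lines: a tropical line $\Lambda_i$ is cut out by a tropical polynomial $f_i$ (the min/max of three affine functions), and the function $\psi_i$ on $\Sigma$ whose slope at $p$ equals the slope of $f_i$ at $\phi(p)$ satisfies $\phi_*\ddiv(\psi_i)=\Lambda_i\cdot\Gamma$. Then $\psi_2-\psi_1$ is a single explicit rational function on $\Sigma$ whose divisor is the difference of the two pullbacks, which proves both assertions at once. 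Your wall-crossing analysis (``the limiting divisor on either side of a wall differs only by a principal divisor'') is precisely the content that this function supplies for free; asserting it from ``continuity of stable intersection plus balancing'' is not a proof, and making it one would require a case analysis at each critical $s$ that you do not carry out.

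The second, more serious gap is the logical direction of your final step. You prove equivalence in $\pic(\Sigma,\Gamma)$ first and then try to deduce equivalence of the pullbacks in $\pic(\Sigma)$. That implication does not hold: $\Lambda\cdot\Gamma-\Lambda'\cdot\Gamma=\phi_*(\ddiv f)$ only tells you that your chosen lifts differ from $\ddiv f$ by an element of $\ker\phi_*$, which contains the nontrivial classes $[Q]-[Q']$ with $\phi(Q)=\phi(Q')$ (these generate a subgroup the paper explicitly quotients by, so they are generally not principal on $\Sigma$). You flag this yourself but resolve it only by hoping the ``natural geometric lift'' works out. The correct order is the reverse: establish equivalence of the pullbacks upstairs in $\pic(\Sigma)$ via the function $\psi_2-\psi_1$, and then push forward to get equivalence in $\pic(\Sigma,\Gamma)$, which is automatic since $\phi_*(\prin(\Sigma))$ is exactly what $\pic(\Sigma,\Gamma)$ quotients by.
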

	
	\begin{proof}
		Let $\Lambda_1$, $\Lambda_2$ be tropical lines with defining equations $f_1,f_2$. For each $i$, consider the function $\psi_i$ on $\Sigma$ whose slope at a point $p$ equals the slope of $f_i$ at $\phi(p)$. Then $\phi_*\dv(\psi_i)=\Lambda_i\cdot\Gamma$. Moreover, the two line sections $\Lambda_1\cdot\Gamma$ and $\Lambda_2\cdot\Gamma$ are equivalent since their difference is the pushforward of the piecewise linear function $\dv(\psi_2)-\dv(\psi_1)$.

	\end{proof}

	\begin{lemma}\label{lem:loop}
		Suppose that a tropical line $\Lambda$ meets a tropical curve $\Gamma$ at a point $P$ with positive weight. If $P$ is at the vertex of $\Lambda$, then the stable intersection has multiplicity at least $3$. Otherwise, the stable intersection has multiplicity at least $2$. 
		\end{lemma}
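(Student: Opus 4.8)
The plan is to analyze the local geometry of the stable intersection at a point $P$ where the tropical curve $\Gamma$ carries positive weight $w_P > 0$. Recall that positive weight at $P$ means the dual face of $P$ in the Newton polygon contains an interior lattice point, so by Pick's theorem this face has area at least $\frac{3}{2}$. The key observation I would exploit is that the stable intersection multiplicity can be computed as a sum of local contributions, and that each such contribution is a determinant of primitive edge directions weighted by the edge weights. First I would recall the standard formula: if a tropical line $\Lambda$ and the curve $\Gamma$ meet transversally along edges with primitive directions $u$ and $v$ and weights $m$ and $n$, the local multiplicity is $m \cdot n \cdot |\det(u,v)|$. The positive weight at $P$ forces either higher edge weights or higher valency, both of which inflate this product.

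The main steps I would carry out are as follows. I would first treat the case where $P$ lies at the vertex of the tropical line $\Lambda$, since the three rays of $\Lambda$ emanate in the directions $(-1,0)$, $(0,-1)$, and $(1,1)$. Because $\Gamma$ has positive weight at $P$, the dual face is a polygon of area at least $\frac{3}{2}$, hence its boundary meets at least four lattice edges or contributes multiple primitive segments in the directions transverse to the line's rays. I would compute the stable intersection by perturbing $\Lambda$ by a generic small vector $\epsilon \cdot v$ so that the intersection becomes transverse, then count the resulting transverse intersection points with multiplicity and take the limit. The upshot is that the perturbed line must cross the local picture of $\Gamma$ in at least three distinct transverse points (counted with multiplicity), because the vertex of the line simultaneously interacts with the enlarged dual face. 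For the second case, where $P$ is in the relative interior of an edge of $\Lambda$ (not at its vertex), only one ray-direction of the line is locally relevant, and the positive weight still forces the local intersection to have multiplicity at least $2$, since the dual edge of $\Gamma$ passing through $P$ has lattice length at least $2$, or the transverse determinant is at least $2$.

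The cleanest way to organize this is to pass to the dual Newton subdivision. The stable intersection multiplicity at $P$ equals the mixed volume contribution, i.e. the lattice area of the Minkowski cell dual to the intersection point in the common refinement of the subdivisions of $\Gamma$ and $\Lambda$. When $P$ sits at the vertex of $\Lambda$, the dual cell of $P$ in $\Gamma$ (area $\geq \frac{3}{2}$ by the positive weight) must combine with the full triangle dual to the line's trivalent vertex, and I would show the resulting mixed cell has lattice area at least $3$. When $P$ is on an edge of $\Lambda$, only a segment of the line's Newton polygon participates, giving a mixed cell of lattice area at least $2$. Translating lattice area of the mixed cell back into intersection multiplicity yields the two bounds.

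The hard part will be handling the non-transverse configurations rigorously, since the intersection $\Lambda \cap \Gamma$ can be a positive-dimensional segment rather than a finite set of points, and the definition of stable intersection requires a careful limiting argument. I expect the main obstacle to be verifying that the perturbation $\Lambda + \epsilon \cdot v$ does not ``lose'' intersection multiplicity in the limit, which amounts to checking that the local multiplicities are lower semicontinuous and that the positive weight at $P$ genuinely contributes the claimed extra unit of multiplicity regardless of which generic direction $v$ is chosen. I would resolve this by appealing to the dual-subdivision description above, where the mixed-cell area is manifestly independent of the perturbation direction, so the bound is uniform.
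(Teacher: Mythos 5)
Your proposal is essentially the paper's own argument: the paper disposes of this lemma in one line by ``applying tropical B\'ezout's theorem locally at the vertex,'' which is precisely the local mixed-volume/dual-subdivision computation you describe (the dual face of $P$ has an interior lattice point, hence lattice width at least $2$ in every direction and mixed volume at least $3$ against the unit triangle). Your write-up just makes explicit the dual-cell bookkeeping that the paper leaves implicit, so there is nothing to change in the approach.
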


		\begin{proof}
The result follows immediately by applying tropical B\'ezout's theorem \cite[Theorem 1.3.2]{MS} locally at the vertex. 
		\end{proof}

	\begin{proposition}\label{prop:canonicalLines}
		A line section of a tropical quartic is equivalent to the canonical divisor.
	\end{proposition}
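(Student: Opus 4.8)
The plan is to show that a line section of the tropical quartic pushes forward from a theta characteristic of $\Sigma$, or equivalently that it equals $\phi_*(K_\Sigma) = K_\Gamma$. Since the canonical divisor of $\Gamma$ was \emph{defined} as $\phi_*(K_\Sigma)$, the real content is a degree-plus-equivalence computation. By Lemma \ref{lem:lines}, all line sections are linearly equivalent in $\pic(\Sigma,\Gamma)$, so it suffices to exhibit \emph{one} convenient line $\Lambda$ and verify that $\Lambda\cdot\Gamma \sim K_\Gamma$; the claim for every line then follows.

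First I would pass to the paired metric graph and pullbacks. By Lemma \ref{lem:lines}, the pullback $\dv(\psi)$ of a line section is a well-defined class in $\pic(\Sigma)$ independent of the chosen line, where $\psi$ is the piecewise-linear function on $\Sigma$ obtained by lifting the slopes of the defining tropical-line function $f$. The strategy is to show this pullback class equals $K_\Sigma$, and then apply $\phi_*$ to both sides: $\phi_*(\text{pullback}) = \Lambda\cdot\Gamma$ on the one hand, and $\phi_*(K_\Sigma) = K_\Gamma$ by definition on the other. So the heart of the matter is the identity on $\Sigma$.

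To establish the identity on $\Sigma$, I would work degree-first and then locally. The degree count is immediate from tropical B\'ezout: a general line meets the degree-$4$ curve in $4$ points, but more usefully, $\deg(\Lambda\cdot\Gamma) = 4$ while $\deg K_\Gamma = 2g_\Sigma - 2 = 2\cdot 3 - 2 = 4$, so the degrees already match. For the equivalence itself, the cleanest route is local: a tropical line has one trivalent vertex with three primitive directions $(-1,0)$, $(0,-1)$, $(1,1)$, and at each vertex $v$ of $\Sigma$ one compares the incoming slopes of the lifted function $\psi$ against the combinatorial data $\val(v)-2$ defining $K_\Sigma$. Here the dual-polygon description is the key tool: each vertex of $\Gamma$ is dual to a face of the Newton triangle, each edge to a primitive integral direction, and the balancing condition together with the slopes of the tropical line function forces the divisor $\dv(\psi)$ to agree with $K_\Sigma$ vertex by vertex, after accounting for the subdivision into weight-many parallel edges and the $\epsilon$-loops attached at weighted vertices. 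The loops are harmless since each contributes symmetrically (matching the $\val(v)-2$ bookkeeping), and I would check that the contribution of $\psi$ along the two parallel edges replacing a weight-$2$ edge reproduces the correct coefficients on $K_\Sigma$.

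The main obstacle I anticipate is the local verification at non-smooth vertices and at the $\epsilon$-loops, where $\phi$ is genuinely non-injective and the naive slope-lifting must be reconciled with the graph structure of $\Sigma$ rather than $\Gamma$. In particular, one must confirm that lifting slopes from $\Gamma$ to $\Sigma$ and computing $\dv(\psi)$ there produces exactly $\val_\Sigma(v)-2$ at each point, which requires care because the valence in $\Sigma$ differs from that in $\Gamma$ at precisely the singular vertices and loop basepoints. I expect that choosing $\Lambda$ generic enough to meet $\Gamma$ transversally away from the special vertices, and then analyzing each local picture of Figure \ref{fig:singularity} separately, will dispose of these cases; the balancing condition guarantees the slope contributions sum correctly, so the equivalence $\dv(\psi)\sim K_\Sigma$ holds in $\pic(\Sigma)$, and pushing forward completes the proof.
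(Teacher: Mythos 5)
There is a genuine gap at the heart of your argument. You reduce everything to the claim that the pullback $\ddiv(\psi)$ of a line section ``agrees with $K_\Sigma$ vertex by vertex,'' to be verified locally via the balancing condition and the dual Newton polygon. But these two divisors do not agree pointwise, and no choice of line makes them do so: $\ddiv(\psi)$ is supported on $\phi^{-1}(\Lambda\cap\Gamma)$ --- for a generic line, four points lying in the interiors of edges of $\Gamma$ --- whereas $K_\Sigma=\sum_v(\val(v)-2)\cdot v$ is supported on the vertices of $\Sigma$, with coefficients determined purely by valences. The proposition asserts a \emph{linear equivalence}, not an equality of divisors, so no vertex-by-vertex comparison of coefficients can establish it; one must exhibit an actual piecewise linear function on $\Sigma$ whose divisor is $\ddiv(\psi)-K_\Sigma$, and the balancing condition at each vertex does not hand you such a function. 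That construction is precisely the content your proposal skips over with ``the balancing condition guarantees the slope contributions sum correctly.''

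The paper takes a different and, in this setting, more robust route: by tropical Riemann--Roch, a divisor of degree $2g_\Sigma-2=4$ and rank $g_\Sigma-1=2$ is canonical, so it suffices to show that a suitable lift $L$ of $\Lambda\cdot\Gamma$ to $\Sigma$ (supported away from the $\epsilon$-loops, with chips on a weighted edge lifted to the same edge of $\Sigma$) has rank $2$. Since Lemma \ref{lem:lines} makes all line sections equivalent, this amounts to producing, for every pair of points $P,Q\in\Sigma$, some tropical line whose section dominates them after pullback; the case analysis (both points on a loop, handled via Lemma \ref{lem:loop}; both in the interior of a weighted edge parallel to a standard direction; and the generic case of the line through $\phi(P)$ and $\phi(Q)$) is exactly where the non-smoothness is absorbed. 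If you want to salvage a direct computational proof along your lines, you would need to construct the interpolating rational function explicitly (in the spirit of the adjoint/interior-lattice-point description of the canonical class on smooth tropical plane curves) and then check it survives the passage to $\Sigma$ at weighted edges and loops; that is substantially more than the local check you describe, and it is not clear it is easier than the rank argument.
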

	
	\begin{proof}
		 By  B\'ezout's theorem, $\Lambda\cdot \Gamma$ has degree 4. Consider a lift $L$ of $\Lambda\cdot\Gamma$ that is supported  away from the loops of $\Sigma$,  such that any two chips on a weighted edge of $\Gamma$ are lifted to the same edge of $\Sigma$. We claim that $L$ is canonical.		 According to Riemann--Roch,  it is suffices to show it has rank 2. 			Let $P$ and $Q$ be points of $\Sigma$. By Lemma \ref{lem:lines}, it suffices to find a tropical line that meets $\phi(P)$ and $\phi(Q)$ with appropriate multiplicities. We distinguish several cases. 
		 
		 \begin{enumerate}
		 \item Both $P$ and $Q$ are in the interior of the same loop. Consider the line $\Lambda'$ whose vertex is at $\phi(P)$.
Then by Lemma \ref{lem:loop}, the pullback of $\Gamma\cdot\Lambda$ has at least $3$ chips at the base of the loop, and is therefore equivalent to an effective divisor with a chip on $P$ and $Q$. 

\item Both $P$ and $Q$ are in the interior of the same edge that is parallel to a standard direction. Consider the line $\Lambda$ whose vertex coincides with a vertex of the edge, and one of its ends overlaps with the edge. Its stable intersection with the edge is at least $2$ along the vertex, and $1$ along the opposite vertex. The pullback of the line is therefore equivalent to a divisor that contains $P$ and $Q$. 

\item Otherwise, consider the line through $\phi(P)$ and $\phi(Q)$. Then its pullback is equivalent to a divisor that contains $P$ and $Q$.

		 \end{enumerate}
 
%
%
	\end{proof}

As an immediate consequence we conclude,
\begin{corollary}
Every effective theta characteristic of a tropical 	plane quartic corresponds to a bitangent line.
\end{corollary}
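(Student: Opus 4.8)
The plan is to reverse the reasoning behind Proposition~\ref{prop:canonicalLines}: rather than passing from a line to the canonical class, I would start from the theta characteristic and manufacture a line realizing the tangency. Since $\deg K_\Gamma = 2g_\Sigma - 2 = 4$ and an effective theta characteristic $\theta$ satisfies $2\theta \sim K_\Gamma$, it has degree $2$, so I may fix an effective representative $\theta \sim P + Q$ with $P, Q \in \Gamma$ (allowing $P = Q$). Then $2P + 2Q \sim K_\Gamma$, and by Proposition~\ref{prop:canonicalLines} this effective divisor lies in the same class as every line section.

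First I would reduce the corollary to producing a single tropical line $\Lambda$ whose stable intersection $\Lambda \cdot \Gamma$ can be transformed into $2P + 2Q$ by a piecewise linear function supported on $\Lambda \cap \Gamma$. Once such a line exists, the redistributing function is constant outside the intersection by construction, so the definition of bitangent is met with tangency points $P$ and $Q$. The essential point is that this redistribution is only possible when $\Lambda \cap \Gamma$ has one-dimensional components: a line meeting $\Gamma$ in isolated transverse points leaves no room to move chips, whereas an edge overlap lets one slide the intersection multiplicity to the midpoint, exactly the tangency noted after the definition of bitangent.

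I would then construct $\Lambda$ by a case analysis on the position of $P$ and $Q$, parallel to the three cases of Proposition~\ref{prop:canonicalLines}. The mechanism for a multiplicity-$2$ contact is either an overlap of a ray of $\Lambda$ with an edge of $\Gamma$, whose midpoint carries the double chip, or passage of the vertex of $\Lambda$ through a point of positive weight, where Lemma~\ref{lem:loop} supplies the extra multiplicity. I would arrange $\Lambda$ to have such a double contact at each of $\phi(P)$ and $\phi(Q)$; tropical B\'ezout then forces the two double points to account for all of the degree-$4$ section, yielding $\Lambda \cdot \Gamma \sim 2P + 2Q$ after moving chips to the two midpoints. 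When $P$ or $Q$ lies awkwardly --- in the interior of a loop or on a weighted edge of $\Sigma$ --- I would first replace $P + Q$ by a more convenient linearly equivalent effective divisor using Lemma~\ref{lem:lines}.

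The main obstacle is this construction: showing that for \emph{every} effective representative of $\theta$ one can find a genuine balanced tropical line, with ends in the three standard directions, achieving the double contact at both points at once. The matching of dimensions --- the two-parameter family of tropical lines against $\dim |K_\Gamma| = g - 1 = 2$ --- makes it believable that the line sections sweep out all effective canonical divisors, and in particular $2P + 2Q$; but turning this count into an explicit tangent line, uniformly over all configurations, is the step that requires care.
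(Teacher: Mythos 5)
Your reduction is sound as far as it goes: an effective theta characteristic has degree $2$, so $\theta \sim P+Q$ with $2P+2Q \sim K_\Gamma$, and by Proposition~\ref{prop:canonicalLines} this class coincides with that of every line section. You also correctly isolate the real difficulty, namely that linear equivalence $\Lambda\cdot\Gamma \sim 2P+2Q$ is far weaker than what the definition of bitangency demands --- a redistributing function that is \emph{constant outside $\Lambda\cap\Gamma$} --- so one must actually exhibit a line with double contact at both points. But that is precisely where your proposal stops: the construction of such a line, ``uniformly over all configurations,'' is announced as the main obstacle and then left undone. The dimension count you offer (two-parameter family of lines versus $\dim|K_\Gamma|=2$) is a heuristic, not an argument; it does not show that the specific divisor $2P+2Q$ is swept out, and in the non-smooth setting (weighted edges, weighted vertices, dropped genus) the case analysis needed to make this work is substantial. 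So as written there is a genuine gap at the central step.

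It is worth noting that the paper takes an entirely different route to close exactly this gap. Section~3 only establishes the easy direction (a bitangent line yields an effective theta characteristic, via Proposition~\ref{prop:canonicalLines}); the statement that every effective theta characteristic is realized by a bitangent is explicitly deferred to Section~4, where it is proved by degeneration (Theorem~\ref{thm:continuity}): one approximates $\Gamma$ by smooth quartics $\Gamma_n$, invokes the known result of \cite{BLMPR} that each $\Gamma_n$ has seven bitangents matching its seven effective theta characteristics, and then uses compactness of the universal Picard space and upper semicontinuity of rank to pass the bitangents to the limit. Your direct constructive strategy is the one the authors acknowledge in their closing remark as a viable alternative (it is essentially the BLMPR argument adapted to the singular case), but carrying it out requires the explicit case-by-case construction of tangent lines --- for instance via Zharkov's cycle description of theta characteristics --- which your proposal does not supply.
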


It is left to show that every effective theta characteristic can be realized as the intersection with a line. We do that in the next section.

	\section{Families of bitangent lines}	
	In this section, we study the behaviour of bitangents of tropical quartics in families, and reinterpret the multiplicity as the number of bitangents converging to it from nearby curves (cf. \cite[Section 3]{CS}). 	
	In order to discuss families of bitangent lines, we set up some notations.	
	The \emph{combinatorial type} of a weighted metric graph is the finite graph obtained by forgetting the edge lengths.
	Let $\Mp_g$ be the moduli space of planar graphs of genus $g$ (see \cite{BJMS}).  	Given a converging sequence of tropical curves, we may assume, perhaps after passing to a subsequence, that their skeletons all  have the same combinatorial type $G$, and that the combinatorial type of the curve in the limit is obtained by contracting edges of $G$. Therefore, they all live in the closure of the same cone of $\Mp_g$. Similarly, if we consider their paired metric graphs, we may assume that they all live in the same cone of the moduli space of tropical curves $M_g^{\trop}$, and have the same combinatorial type $H$.  We therefore have a canonical identification of the theta characteristics of $\Sigma_n$ and $\Sigma$ with cycles of $G$. 
	
	Let $\pic_2(H,w)$ be the tropical universal Picard space classifying divisor classes of degree $2$ on all the metric graphs of type $H$. Then by \cite{Len1}, this space is compact, and the rank of divisors varies upper semicontinuously. In particular, every sequence of theta characteristics has a subsequence that converges to a theta characteristic. 
	
	Let $\theta$ be an effective theta characteristic of $\Gamma$ of characteristic $k$. 
	 Then there are $k$  theta characteristic $\theta^1,\theta^2,\ldots,\theta^k$ of $\Sigma$ corresponding to $\Lambda$. Each $\theta^i$ is the limit of a sequence of theta characteristics $\theta^i_n$ of $\Sigma_n$. Each of them gives rise to a sequence of bitangent lines $\Lambda_n$ of $\Gamma_n$, which converges to a bitangent of $\Gamma$.
Therefore we have shown, 
	
	\begin{theorem}\label{thm:continuity}
	Each theta characteristic of $\Gamma$ gives rise to a bitangent line. Moreover, if $\Gamma_n$ is a sequence of tropical quartics converging to $\Gamma$, then the multiplicity of $\Lambda$ equals the sum of the multiplicities of the bitangents of $\Gamma_n$ converging to it.
	\end{theorem}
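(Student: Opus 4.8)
The plan is to prove both assertions by degeneration, transferring the seven bitangents of a smooth quartic to $\Gamma$ along a converging family and tracking multiplicities through the pairing morphism. I would first treat the realization statement, that every effective theta characteristic $\theta$ of $\Gamma$ is cut out by an actual bitangent line. Fix a sequence of \emph{smooth} tropical quartics $\Gamma_n$ degenerating to $\Gamma$ inside one closed cone of $\Mp_3$, chosen so that the paired graphs $\Sigma_n$ degenerate to $\Sigma$ within a fixed cone of $M_3^{\trop}$ and all their theta characteristics are identified with the classes indexed by the cycles of the common combinatorial type. Since each $\Gamma_n$ is smooth we have $\Sigma_n=\Gamma_n$, and its seven effective theta characteristics are realized by bitangent lines, as in \cite{BLMPR}.

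Given an effective $\theta$ of $\Gamma$, Lemma \ref{lem:multiplicity} exhibits its multiplicity $k$ as the number of effective theta characteristics $\theta^1,\dots,\theta^k$ of $\Sigma$ in the fiber $\phi_*^{-1}(\theta)$. Each $\theta^i$ is the limit of the cycle-indexed class $\theta^i_n$ on $\Sigma_n$, which, being effective, is cut out by a bitangent $\Lambda^i_n$. Using compactness of $\pic_2(H,w)$ and upper semicontinuity of rank \cite{Len1}, I would pass to a subsequence along which each $\Lambda^i_n$ converges to a tropical line $\Lambda^i$. Upper semicontinuity of stable intersection multiplicity then guarantees that the double tangencies persist, so $\Lambda^i$ is bitangent; its intersection divisor pulls back on $\Sigma$ to $2\theta^i$ (the limit of $2\theta^i_n$) and pushes down to $2\theta$, which is canonical by Proposition \ref{prop:canonicalLines}. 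Hence $\theta$ is realized by the bitangent class $\Lambda$, proving the first assertion.

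For additivity of multiplicities along an arbitrary converging sequence $\Gamma_n\to\Gamma$, I would argue entirely on the fixed index set $T=H_1(H,\ZZ/2)$ of cycles. The pairing morphisms determine kernels $K_n=\ker(\phi_{n*})\subseteq K=\ker(\phi_*)$ in $T$, and a theta characteristic of $\Gamma_n$ (resp. $\Gamma$) is a coset of $K_n$ (resp. $K$). The fiber $t+K$ of $\theta$ is partitioned by the sub-cosets $t'+K_n$, and the bitangents of $\Gamma_n$ converging to $\Lambda$ are exactly those indexed by these sub-cosets. Since Zharkov's recipe makes an effective theta characteristic of $\Sigma_n$ precisely one of non-trivial cycle index, summing $\mathrm{mult}(\theta_n)=|\{s\in t'+K_n: s\neq 0\}|$ over the sub-cosets telescopes to $|\{s\in t+K: s\neq 0\}|=k=\mathrm{mult}(\Lambda)$; any sub-coset meeting only the trivial class contributes $0$ and correctly carries no bitangent.

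The main obstacle I anticipate is the analytic matching of the two limits: one must ensure that the convergence $\Lambda^i_n\to\Lambda^i$ of lines is compatible with the convergence $\theta^i_n\to\theta^i$ in the universal Picard space, so that no intersection multiplicity is lost in the limit and distinct bitangents of $\Gamma_n$ are not collapsed onto the same class of $\Gamma$ in a way that spoils the count. This is exactly where the compactness and upper semicontinuity of \cite{Len1}, together with upper semicontinuity of stable intersection, do the essential work; the remaining coset bookkeeping is routine.
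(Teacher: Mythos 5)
Your proposal follows essentially the same route as the paper: approximate $\Gamma$ by smooth quartics within a fixed cone of the moduli space, identify theta characteristics across the family via cycles of the common combinatorial type, realize each effective $\theta^i_n$ by a bitangent of $\Gamma_n$ using \cite{BLMPR}, and pass to the limit using compactness of the universal Picard space and upper semicontinuity of rank and of stable intersection from \cite{Len1}. Your coset bookkeeping for the additivity of multiplicities is a more explicit rendering of what the paper leaves implicit in its identification of theta characteristics with cycles and the kernel of $\phi_*$, but it is the same argument.
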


\begin{definition}\label{def:bitangentMultiplicity}	Two bitangents are \emph{equivalent} if they correspond to the same theta characteristic. 
The multiplicity of a bitangent is defined to be the multiplicity of the theta characteristic. 	
\end{definition}
\noindent Equivalently, two bitangents are equivalent  precisely when we can continuously move one of them to the other, while continuously moving the tangency points.

	\begin{example}
		Consider the sequence of tropical quartics $\Gamma_n$ depicted in Figure \ref{fig:sequence}. The height of the upper rectangle is $\frac{1}{n}$, and  the sequence converges to the tropical curve  $\Gamma$ on the right side of the figure.
		 For each $\Gamma_n$, we consider  two bitangent lines $\Lambda_n'$ and $\Lambda_n''$. To avoid overcrowding the picture, we only sketched the vertex of each line. Then  the two sequences of bitangents converge to $\Lambda$, whose multiplicity according to Definition \ref{def:bitangentMultiplicity} is $2$.
		
		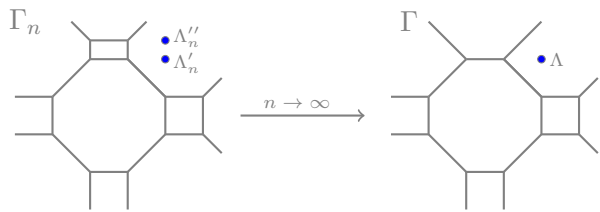
\begin{figure}[h]
			\begin{tikzpicture}[scale=.5,color=gray]

			\begin{scope} 
			
			\node [left] at (-1,1) {$\Gamma_n$};
			\draw[thick] (0,0) -- (0,.5);
			\draw[thick] (0,0) -- (-1,-1);
			\draw[thick] (0,0) -- (1,0);
			\draw[thick] (1,0) -- (1,.5);
			\draw[thick] (1,0) -- (2,-1);
			\draw[thick] (0,.5) --(1,.5);
			\draw[thick] (0,.5)--(-.5,1);
			\draw[thick] (1,.5)--(1.5,1);
			\draw[thick] (-1,-1)--(-1,-2)--(0,-3)--(1,-3)--(2,-2)--(2,-1)--(1,0);
			\draw[thick] (-1,-1)--(-2,-1);
			\draw[thick] (-1,-2)--(-2,-2);
			\draw[thick] (2,-1)--(3,-1);
			
			\draw[thick] (3,-1)--(3,-2);
			\draw[thick] (3,-1)--(3.5,-.5);
			\draw[thick] (3,-2)--(3.5,-2.5);
			
			\draw[thick] (2,-2)--(3,-2);
			\draw[thick] (0,-3)--(0,-4);
			\draw[thick] (1,-3)--(1,-4);
			
			\draw[thick, ->] (4,-1.5) -- (7.3,-1.5);
			\tiny\node [above] at (5.5, -1.5) {$n\to\infty$};
			
			\draw [fill=blue,radius=.1] (2,0) circle;
			\draw [fill=blue,radius=.1] (2,.5) circle;
			\tiny
			\node [right] at (2,-.1) {$\Lambda'_n$};
			\node [right] at (2,.6) {$\Lambda''_n$};

			\end{scope}
			
			\begin{scope}[shift={(10,0)}] 
			\node [left] at (-1,1) {$\Gamma$};
			\draw[thick] (0,0) -- (-1,-1);
			\draw[thick] (0,0) -- (1,0);
			\draw[thick] (1,0) -- (2,-1);
			\draw[thick] (0,0)--(-1,1);
			\draw[thick] (1,0)--(2,1);
			\draw[thick] (-1,-1)--(-1,-2)--(0,-3)--(1,-3)--(2,-2)--(2,-1)--(1,0);
			\draw[thick] (-1,-1)--(-2,-1);
			\draw[thick] (-1,-2)--(-2,-2);
			\draw[thick] (2,-1)--(3,-1);
			\draw[thick] (2,-2)--(3,-2);
			\draw[thick] (0,-3)--(0,-4);
			\draw[thick] (1,-3)--(1,-4);
			
			\draw[thick] (3,-1)--(3,-2);
			\draw[thick] (3,-1)--(3.5,-.5);
			\draw[thick] (3,-2)--(3.5,-2.5);
			
			\draw [fill=blue,radius=.1] (2,0) circle;
			\tiny\node [right] at (2,0) {$\Lambda$};
			
			\end{scope}
			
			\end{tikzpicture}
			\caption{A converging sequence of tropical quartics.}
			\label{fig:sequence}
		\end{figure}
	\end{example}

We are now ready to prove  Theorem \ref{thm:mainTheorem}. 

\begin{theorem}\label{thm:mainTheorem}
		Let $\Gamma$ be a tropical plane quartic curve with genus g. If g is 3, it has 7 equivalence classes of bitangent lines each with multiplicity 1. If g is less than 3, it has $2^g$ equivalence classes of bitangent lines: only one of them has multiplicity $2^{3-g}-1$, and all the others have multiplicity $2^{3-g}$.
	\end{theorem}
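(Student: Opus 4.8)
The plan is to assemble the pieces established earlier into a counting argument, using the single numerical input that the paired metric graph of a quartic has genus $g_\Sigma = \frac{(d-1)(d-2)}{2} = 3$ when $d=4$. First I would nail down the correspondence between equivalence classes of bitangents and theta characteristics of $\Gamma$. For the upper bound, given a bitangent $\Lambda$ tangent at $P$ and $Q$, Proposition \ref{prop:canonicalLines} shows $\Lambda\cdot\Gamma \sim K_\Gamma$; since this divisor has degree $4$ and, after adding the relevant principal divisor, its effective representative contains $2P+2Q$, it must equal $2P+2Q$, so $P+Q$ is a theta characteristic. By Definition \ref{def:bitangentMultiplicity} two bitangents are equivalent exactly when they induce the same theta characteristic, so the equivalence classes inject into the set of theta characteristics of $\Gamma$, of which there are $2^{g}$ by the discussion following Proposition \ref{prop:AJ}.

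For the reverse inclusion I would invoke Theorem \ref{thm:continuity}, which realizes every theta characteristic of $\Gamma$ by a bitangent line. Combined with the upper bound this yields a bijection between equivalence classes of bitangents and theta characteristics, hence exactly $2^{g}$ of them.

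The multiplicity statement then follows from Lemma \ref{lem:multiplicity} with $g_\Sigma = 3$ and $g_\Gamma = g$: the $2^{g_\Sigma - g_\Gamma}:1 = 2^{3-g}:1$ fibration of the theta characteristics of $\Sigma$ over those of $\Gamma$ shows that exactly one class receives the unique non-effective theta characteristic of $\Sigma$, leaving it $2^{3-g}-1$ effective preimages, while each of the remaining $2^{g}-1$ classes has all $2^{3-g}$ preimages effective. Since Definition \ref{def:bitangentMultiplicity} sets the multiplicity of a bitangent equal to that of its theta characteristic, as computed by Definition \ref{def:multiplicity}, the multiplicities are $2^{3-g}-1$ for one distinguished class and $2^{3-g}$ for the rest.

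The hard part will be the careful treatment of effectivity, which is precisely where the $g=3$ and $g<3$ cases diverge. When $g<3$ we have $2^{3-g}-1 \geq 1$, and the distinguished class is itself effective: its non-effective preimage on $\Sigma$ shares a fiber with at least one effective theta characteristic whose pushforward is effective (the remark following Lemma \ref{lem:multiplicity}), so all $2^g$ classes are honest bitangent lines of positive multiplicity. When $g=3$ the fibration is $1:1$, so the distinguished class is the image of the single non-effective theta characteristic (the one arising from the trivial cycle in Zharkov's recipe) and has multiplicity $2^0-1 = 0$; this class is not equivalent to an effective divisor and so is not counted as a genuine bitangent, leaving precisely $7$ bitangents of multiplicity $1$, in agreement with \cite[Theorem 3.9]{BLMPR}. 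I would therefore take care that the limiting realizability of Theorem \ref{thm:continuity} and the effectivity bookkeeping are reconciled exactly at this degenerate class, confirming that it is the unique one whose multiplicity drops.
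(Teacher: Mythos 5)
Your proposal is correct and follows essentially the same route as the paper: the paper's own proof is simply the combination of Lemma \ref{lem:multiplicity} (for the count and multiplicities of theta characteristics) with Theorem \ref{thm:continuity} (to realize each as a bitangent class). Your additional bookkeeping on the injection of bitangent classes into theta characteristics via Proposition \ref{prop:canonicalLines} and on effectivity at the distinguished class is a more explicit version of what the paper leaves implicit, not a different argument.
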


\begin{proof}
Let $\Gamma$ be a tropical quartic. By Lemma \ref{lem:multiplicity}, it has one effective theta characteristic of multiplicity $2^{3-g}-1$, and all others have multiplicity $2^{3-g}$. By Theorem \ref{thm:continuity}, each of them corresponds to a bitangent line of the same multiplicity. %
\end{proof}

\begin{remark}
Most of the ingredients of the proof of \cite[Theorem 3.9]{BLMPR} carry well to the non-smooth case. Therefore,  our Theorem \ref{thm:mainTheorem} could also be proven directly, by making the necessary adjustments. 
\end{remark}

We finish by asking about the relation between algebraic and tropical bitangents.  When $\Gamma$ is smooth, there are $4$ bitangents of $C$ tropicalizing to every bitangency class of $\Gamma$ \cite{JL, LM}. 
\begin{conjecture}
		Let $C$ be an algebraic curve over the field of Puiseux series, and let  $\Gamma = \trop(C)$. Let $[\Lambda]$ be an equivalence class of bitangents of $\Gamma$ of total multiplicity $m$. Then there are $4m$ bitangents of $C$ tropicalizing to $[\Lambda]$.
	\end{conjecture}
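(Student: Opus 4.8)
The plan is to route the count through theta characteristics, combining the bitangent--theta-characteristic dictionary developed in Section 3 with the known $4:1$ lifting in the smooth case. Since $C$ is a smooth plane quartic, it is canonically embedded of genus $3$, and its $28$ bitangents are in bijection with its odd (effective) theta characteristics, i.e.\ the $28$ odd ones among the $2^{2g}=64$ theta characteristics of $C$, a torsor under $\Jac(C)[2]\cong(\ZZ/2)^{6}$. Now $\Gamma=\trop(C)$ has genus $g_\Gamma<3$, whereas its paired graph $\Sigma$ always has genus $g_\Sigma=3$; the first step is to identify $\Sigma$ with the Berkovich skeleton of $C$. The planar projection collapses the $3-g_\Gamma$ cycles that account for the genus drop, and the paired construction restores exactly these, so that $C$ has totally degenerate reduction with dual graph $\Sigma$. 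Granting this, tropicalization induces a surjective group homomorphism on $2$-torsion $\Jac(C)[2]\to\Jac(\Sigma)[2]$, equivalently a map from the $64$ theta characteristics of $C$ onto the $2^{g_\Sigma}=8$ theta characteristics of $\Sigma$ with every fiber of size $2^{g_\Sigma}=8$. Composing with the pushforward $\phi_*\colon\Jac(\Sigma)\to J(\Sigma,\Gamma)$ yields the full chain carrying an algebraic bitangent of $C$ to the bitangency class $[\Lambda]$ of $\Gamma$ to which it tropicalizes.

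The key input is the smooth-case result of \cite{JL, LM}, reinterpreted as a statement about the abstract genus-$3$ curve $\Sigma$: \emph{each effective theta characteristic of $\Sigma$ is the tropicalization of exactly $4$ odd theta characteristics of $C$, while the unique non-effective theta characteristic is the tropicalization of none}. This is consistent with the global count, since $\Sigma$ has $7$ effective and $1$ non-effective theta characteristic and $7\cdot 4=28$; dually the $36$ even theta characteristics distribute as $4$ over each effective class and $8$ over the non-effective one, so that every fiber has the correct size $8$. The essential point is that this $4:1$ phenomenon is intrinsic to $\Sigma$ and its theta characteristics and makes no reference to the planar embedding, so it persists even when the planar tropicalization $\Gamma$ is singular and $g_\Gamma<3$.

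With these ingredients the count is immediate. A bitangent of $C$ tropicalizes to $[\Lambda]=\theta$ precisely when the associated odd theta characteristic of $C$ maps to $\theta$ under the chain above, that is, when it tropicalizes to an \emph{effective} theta characteristic of $\Sigma$ lying in the fiber $\phi_*^{-1}(\theta)$. By Definition \ref{def:multiplicity} and Lemma \ref{lem:multiplicity}, this fiber contains exactly $m$ effective theta characteristics of $\Sigma$, where $m$ is the multiplicity of $[\Lambda]$, together with $2^{g_\Sigma-g_\Gamma}-m$ non-effective ones. Each effective member contributes $4$ odd lifts and each non-effective member contributes none, so the number of bitangents of $C$ tropicalizing to $[\Lambda]$ is $4m$, as claimed. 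Summing over all classes recovers $4\sum m=4\cdot 7=28$, in agreement with the total bitangent count.

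The main obstacle is making precise, and proving, the compatibility of the two bitangent--theta-characteristic dictionaries — the classical one on $C$ and the tropical one on $\Gamma$ built in Section 3 — across the genus drop: one must show that tropicalizing an algebraic bitangent line and recording the theta characteristic of $\Gamma$ that our correspondence attaches to it agrees with first tropicalizing the corresponding odd theta characteristic of $C$ into $\Jac(\Sigma)$ and then applying $\phi_*$. This is precisely the step where the singular features of $\Gamma$ (weighted edges, higher-valent vertices, and positive-genus vertices) could make the planar intersection divisor disagree with the abstract specialization, and verifying the identification of $\Sigma$ with the skeleton of $C$ belongs here as well. A secondary difficulty is establishing the $4:1$ lifting as a genuinely intrinsic statement about $\Sigma$ rather than about a smooth planar model, which likely requires the faithful-tropicalization and semistable-model machinery underlying \cite{JL}. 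As an alternative route that sidesteps the dictionary issue, one could deform $\Gamma$ to nearby smooth tropical quartics $\Gamma_s$, lift to a family $C_s$ whose $28$ bitangents each tropicalize $4:1$ onto the $7$ classes of $\Gamma_s$, and invoke Theorem \ref{thm:continuity} to see that the tropical classes merging into $[\Lambda]$ have total multiplicity $m$; the remaining task is then to control the algebraic family and the limits of its bitangents, whose continuity in such a degeneration is itself delicate.
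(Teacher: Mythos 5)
The statement you are trying to prove is stated in the paper as an open \emph{conjecture}: the authors give no proof of it, so there is no ``paper proof'' to compare against, and your proposal must be judged on its own as an attempted resolution. As written it is a plausible strategy outline, not a proof, and the gaps are genuine --- indeed you name the worst of them yourself in your final paragraph. The most serious one is the assertion that $\Sigma$ can be identified with the Berkovich skeleton of $C$, so that ``$C$ has totally degenerate reduction with dual graph $\Sigma$.'' This is unjustified and in general false. When a vertex of $\Gamma$ has positive weight, the genus drop can be realized in two inequivalent ways on the algebraic side: the skeleton of $C$ may contain a genuine cycle of some definite positive length that is contracted to the vertex under tropicalization, or the semistable reduction of $C$ may acquire a component of positive genus, in which case the skeleton has first Betti number strictly less than $3$ and the reduction is \emph{not} totally degenerate. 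The paired graph $\Sigma$, whose loops have an arbitrary length $\epsilon$ chosen purely as a bookkeeping device, need not coincide with the skeleton in either case, and in the second case the results of \cite{JL, LM} on lifting theta characteristics (which are proved under faithfulness/total-degeneracy hypotheses) simply do not apply: the distribution of the $28$ odd theta characteristics of $C$ over the fibers of $\Jac(C)[2]\to\Jac(\Sigma)[2]$ then depends on the theta characteristics of the positive-genus components, and the uniform ``$4$ odd lifts per effective class'' count can fail.

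This means your ``key input'' --- that the $4{:}1$ lifting is ``intrinsic to $\Sigma$'' and ``persists even when $\Gamma$ is singular'' --- is not a citation of known results but essentially a restatement of the conjecture itself, and the final count in your third paragraph is circular without it. There is also a smaller unaddressed point: even granting the skeleton identification, you must show that tropicalizing an algebraic bitangent line of $C$ and taking its stable intersection divisor with the possibly non-smooth $\Gamma$ agrees with specializing the corresponding odd theta characteristic; you flag this compatibility as ``the main obstacle,'' correctly, but nothing in the proposal discharges it, and it is precisely at weighted edges and positive-weight vertices (where Lemma \ref{lem:loop} only gives inequalities on intersection multiplicities) that the two constructions can diverge. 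Your alternative degeneration route via Theorem \ref{thm:continuity} suffers the symmetric problem: that theorem controls limits of \emph{tropical} bitangents and their multiplicities, but says nothing about limits of the algebraic bitangents of a family $C_s$, so the $4m$ count does not follow from it. In short: the architecture (route the count through theta characteristics, use Lemma \ref{lem:multiplicity} to get the factor $m$, use the smooth-case $4{:}1$ result to get the factor $4$) is a sensible attack and consistent with how the authors motivate the conjecture, but the two load-bearing steps are exactly the open parts of the problem.
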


	\bibliographystyle{alpha}
	\bibliography{bibfile}

\newcommand{\etalchar}[1]{$^{#1}$}
\begin{thebibliography}{BLM{\etalchar{+}}15}

\bibitem[BF11]{BakerFaber}
M.~Baker and X.~Faber.
\newblock Metric properties of the tropical {A}bel-{J}acobi map.
\newblock {\em J. of Alg. Comb.}, 33(3):349{--}81, 2011.

\bibitem[BJ16]{BJ}
M.~Baker and D.~Jensen.
\newblock {\em Degeneration of Linear Series from the Tropical Point of View
  and Applications}, pages 365--433.
\newblock Springer International Publishing, Cham, 2016.

\bibitem[BJMS15]{BJMS}
S.~Brodsky, M.~Joswig, R.~Morrison, and B.~Sturmfels.
\newblock Moduli of tropical plane curves.
\newblock {\em Res. Math. Sci.}, 2:Art. 4, 31, 2015.

\bibitem[BLM{\etalchar{+}}15]{BLMPR}
M.~Baker, Y.~Len, R.~Morrison, N.~Pflueger, and Q.~Ren.
\newblock Bitangents of tropical plane quartic curves.
\newblock {\em Mathematische Zeitschrift}, pages 1--15, 2015.

\bibitem[CJ15]{Chan2}
M.~Chan and P.~Jiradilok.
\newblock Theta characteristics of tropical $k_4$-curves.
\newblock {\em Preprint arXiv:1503.05776}, 2015.

\bibitem[CS03]{CS}
L.~Caporaso and E.~Sernesi.
\newblock Recovering plane curves from their bitangents.
\newblock {\em J. Algebraic Geom.}, 12(2):225--244, 2003.

\bibitem[GK08]{GK}
A.~Gathmann and M.~Kerber.
\newblock A {R}iemann-{R}och theorem in tropical geometry.
\newblock {\em Mathematische Zeitschrift}, 259:217{--}230, 2008.

\bibitem[JL16]{JL}
D.~Jensen and Y.~Len.
\newblock Tropicalization of theta characteristics, double covers, and {Prym}
  varieties.
\newblock {\em Preprint arXiv:1606.02282}, June 2016.

\bibitem[Len14]{Len1}
Y.~Len.
\newblock The {B}rill-{N}oether rank of a tropical curve.
\newblock {\em J. Algebraic Combin.}, 40(3):841--860, 2014.

\bibitem[LM17]{LM}
Y.~Len and H.~Markwig.
\newblock Lifting tropical bitngents.
\newblock {\em Preprint arXiv:1708.04480}, August 2017.

\bibitem[Luo11]{Luo}
Y.~Luo.
\newblock Rank-determining sets of metric graphs.
\newblock {\em J. Comb. Theory Series A}, 118(6):1775{--}1793, 2011.

\bibitem[MS15]{MS}
D.~Maclagan and B.~Sturmfels.
\newblock {\em {Introduction to tropical geometry.}}
\newblock Providence, RI: American Mathematical Society (AMS), 2015.

\bibitem[Pan15]{Panizzut}
M.~Panizzut.
\newblock Theta characteristics of hyperelliptic graphs.
\newblock {\em Preprint arXiv:1511.07243}, 2015.

\bibitem[{Pic}99]{Pick}
G.~{Pick}.
\newblock {Geometrisches zur Zahlenlehre.}
\newblock {Sonderabdr. Naturw.-medizin. Verein f. B\"ohmen ``Lotos'' Nr. 8, 9
  S. $8^{\circ}$}, 1899.

\bibitem[Zha10]{Zh}
I.~Zharkov.
\newblock Tropical theta characteristics.
\newblock In {\em Mirror symmetry and tropical geometry}, volume 527 of {\em
  Contemp. Math.}, pages 165--168. Amer. Math. Soc., Providence, RI, 2010.

\end{thebibliography}

\end{document}